\newcommand{\Rmnum}[1]{\expandafter\@slowromancap\romannumeral #1@}
\theoremstyle{plain}
\newtheorem{theorem}{Theorem}[section]
\newtheorem{proposition}[theorem]{Proposition}
\newtheorem{corollary}[theorem]{Corollary}
\theoremstyle{definition}
\newtheorem{example}[theorem]{Example}
\newcommand{\abs}[1]{\lvert#1\rvert}
\allowdisplaybreaks \setlength{\textwidth}{460pt}
\title[UC and APP]{Unbounded convergences in Banach lattices and applications}
\date{\today}
\keywords{Riesz space, Banach lattice, unbounded convergence, L-weakly compact set, L-weakly compact operator, M-weakly compact operator.}
\subjclass[2010]{46A40, 46B42}
\author[Z. Wang]{Zhangjun Wang$^{1}$}
\address{$^1$ The first author:School of Mathematics, Southwest Jiaotong University,
	Chengdu, Sichuan,
	China, 610000.}
\email{zhangjunwang@my.swjtu.edu.cn}
\author[Z. Chen]{Zili Chen$^{2}$}
\address{$^2$ The second author:School of Mathematics, Southwest Jiaotong University, Chengdu, Sichuan,
	China, 610000.}
\email{zlchen@home.swjtu.edu.cn}
\author[J. Chen]{Jinxi Chen$^{3}$}
\address{$^3$ The third author: School of Mathematics, Southwest Jiaotong University, Chengdu, Sichuan,
	China, 610000.}
\email{jinxichen@home.swjtu.edu.cn}
\begin{document}

\begin{abstract}
Several recent papers investigated unbounded convergences in Banach lattices. Combine all unbounded convergences, including \emph{unbounded order (norm, absolute weak, absolute weak*) convergence}, we characterize L-weakly compact sets, L-weakly compact operators and M-weakly compact operators on Banach lattices. Some related results are obtained as well.
\end{abstract}
	
\maketitle

\section{Introduction}
A net $(x_\alpha)_{\alpha\in A}$ in a Riesz space $E$ is \emph{order convergent} to $x\in E$ (write $x_\alpha\xrightarrow{o}x$) if there exists a net $(y_\beta)$, possibly over a different index set, such that $y_\beta\downarrow0$ and for each $\beta\in B$ there exists $\alpha_0\in A$ satisfying $|x_\alpha-x|\leq y_\beta$ for all $\alpha\geq \alpha_0$. The \emph{unbounded order convergence} is considered firstly by Nakano in \cite{N:48} and introduced by Wickstead in \cite{W:77}. A net $(x_\alpha)$ in a Banach lattice $E$ is unbounded order (resp. norm, absolute weak) convergent to some $x$, denoted by $x_\alpha\xrightarrow{uo}x$ (resp. $x_\alpha\xrightarrow{un}x$, $x_\alpha\xrightarrow{uaw}x$), if the net $(|x_\alpha-x|\wedge u)$ converges to zero in order (resp. norm, weak) for all $u\in E_+$. A net $(x_\alpha^\prime)$ in a dual Banach lattice $E^\prime$ is unbounded absolute weak* convergent to some $x^\prime$, denoted by $x_\alpha^\prime\xrightarrow{uaw^*}x^\prime$, if $|x^\prime_\alpha-x^\prime|\wedge u^\prime\xrightarrow{w^*}0$ for all $u^\prime\in E_+^\prime$. For the theory of $uo$, $un$, $uaw$ and $uaw^*$-convergence, we refer to \cite{GX:14,G:14,GTX:16,T:19,KMT:16,Z:16,T:18}.

It can be easily verified that, in $l_p (1\leq p<\infty)$, $uo$, $un$ and $uaw$ and $uaw^*$-convergence of nets are the same as coordinate-wise convergence.  In $L_p(\mu) (1\leq p<\infty)$ for finite measure $\mu$, $uo$-convergence for sequences is the same as almost everywhere convergence, $un$ and $uaw$-convergence for sequences are the same as convergence in measure. In $L_p(\mu) (1< p<\infty)$ for finite measure $\mu$, $uaw^*$-convergence for sequences is also the same as convergence in measure.

In this paper, we characterize the L-weakly compactness in Banach lattices by $uo$, $uaw$ and $uaw^*$-convergence. And as applications, we present some characterizations of L- and M-weakly compact operators on Banach lattices. Some related results are obtained as well.

Recall that a Riesz space $E$ is an ordered vector space in which $x\vee y=\sup\{x,y\}(x\wedge y=\inf\{x,y\})$ exists for every $x,y\in E$. The positive cone of $E$ is denoted by $E_+$, $i.e.,E_+=\{x\in L:x\geq0\}$. For any vector $x$ in $E$ define $x^+:=x\vee0,x^-:=(-x)\vee0, \abs{x}:=x\vee(-x)$. An operator $T:E\rightarrow F$ between two Riesz spaces is said to be \emph{positive} if $Tx\geq0$ for all $x\geq0$. A sequence $(x_n)$ in a Riesz space is called disjoint whenever $n\ne m$ implies $\abs{x_n}\wedge\abs{x_m}=0$ (denoted by $x_n\perp x_m$). A set $A$ in $E$ is said order bounded if there exists some $u\in E_+$ such that $\abs{x}\leq u$ for all $x\in A$. An operator $E:L\rightarrow F$ is called order bounded if it maps order bounded subsets of $E$ to order bounded subsets of $F$. Throughout this paper, $E$ will stand for a Banach lattice. A Banach lattice $E$ is a Banach space $(E, \Vert\cdot\Vert)$ such that $E$ is a Riesz space and its norm satisfies the following property: for each $x, y\in E$ with $\abs{x}\leq\abs{y}$, we have $\Vert x\Vert\leq\Vert y\Vert$.

For undefined terminology, notation and basic theory of Riesz space, Banach lattice and linear operator, we refer to \cite{AB:06,MN:91}.

\section{Results}\label{}
An element $e\in E_+$ is called an \emph{atom} of the Riesz space $E$ if the principal ideal $E_e$ is one-dimensional. $E$ is called an atomic Banach lattice if it is the band generated by its atoms. 
\begin{proposition}\label{}
Let $E$ be a Banach lattice, the following statements hold.
\begin{enumerate}
\item $E$ is atomic and order continuous iff $x_\alpha^\prime\xrightarrow{w^*}0\Rightarrow x_\alpha^\prime\xrightarrow{uaw^*}0$ for any net $(x_\alpha^\prime)$ in $E^\prime$.
\item $E$ is atomic and reflexive iff $x_\alpha^\prime\xrightarrow{uaw^*}0\Leftrightarrow x_\alpha^\prime\xrightarrow{w}0$ for any bounded net $(x_\alpha^\prime)$ in $E^\prime$.
\end{enumerate}
\end{proposition}

\begin{proof}
$(1)\Rightarrow$ For a $w^*$-null net $(x_\alpha^\prime)\subset E^\prime$, it follows from \cite[Theorem~3.4]{G:14} that $x^\prime_\alpha\xrightarrow{uo}0$, therefore $x^\prime_\alpha\xrightarrow{uaw^*}0$.
	
$(1)\Leftarrow$ If $E$ is not order continuous, then there exists a bounded disjoint sequence $(x_n^\prime)$ which is not $w^*$-null by \cite[Corollary~2.4.3]{MN:91}. Therefore, we can find some subnet $(x_\alpha^\prime)$ of $(x_n^\prime)$ which $w^*$-converges to a non-zero functional $x^\prime\neq0$ on $E$. Hence, $(x_\alpha^\prime-x^\prime)\xrightarrow{uaw^*}0$. But, $x_\alpha^\prime\xrightarrow{uaw^*}0$ since $x_n^\prime\xrightarrow{uaw^*}0$. This yields a contradiction.
	
Assume that $E$ is not atomic, then $E^\prime$ is not atomic by \cite[Corollary~2.3]{AE:10}. It follows from \cite[Theorem~3.1]{CW:98} that there exists a sequence $(x_n^\prime)\subset E^\prime$ such that $x_n^\prime\xrightarrow{w^*}0$ and $|x_n^\prime|=x^\prime>0$ for all $n\in N$ and some $x^\prime\in E^\prime$. But, $(x_n^\prime)$ is not $uaw^*$-null since $|x_n^\prime|\wedge x^\prime=x^\prime\nrightarrow0$ by $\sigma(E^\prime,E)$, which is absurd.

$(2)$ by \cite[page~87]{T:18} and $(1)$.
\end{proof}		

Recall that a bounded subset $A$ in a Banach lattice $E$ is said to be \emph{L-weakly compact} if every disjoint sequence in the solid hull of $A$ is convergent to zero. $E^a$ is the maximal ideal in Banach lattice $E$ on which the induced norm is order continuous as
$$E^a=\{x\in E:\text{every monotone sequence in $[0,|x|]$ is convergent}\}.$$
(\cite[Page~71]{MN:91}) Let $E$ be Riesz space, for every non-empty subsets $A\subset E$ and $B\subset E^\sim$, we define the generated absolutely monotone seminorms
$$\rho_{A}(x^\prime)=\sup\{|x^\prime|(|x|):x\in A\}, \rho_{B}(x)=\sup\{|x^\prime|(|x|):x^\prime\in B\},$$ for $x^\prime\in E^\sim$ and $x\in E$.

The following results characterize L-weakly compactness in Banach lattices by $uo$, $uaw$ and $uaw^*$-convergence.

\begin{theorem}\label{L-weakly compact sets}
Let $E$ be a Banach lattice, the following statements hold.
\begin{enumerate}
\item For a non-empty bounded subset $A\subset E$, the following conditions are equivalent.
\begin{enumerate}
\item $A$ is L-weakly compact.
\item\label{L-weakly compact sets-11} $\sup_{x\in A}|x^\prime_n(x)|\rightarrow0$ for every norm bounded $uaw^*$-null sequence $(x^\prime_n)\subset E^\prime$.
\item\label{L-weakly compact sets-22} $\sup_{x\in A}|x^\prime_n(x)|\rightarrow0$ for every norm bounded $uaw$-null sequence $(x^\prime_n)\subset E^\prime$.
\item $\sup_{x\in A}|x^\prime_n(x)|\rightarrow0$ for every norm bounded $uo$-null sequence $(x^\prime_n)\subset E^\prime$.
\end{enumerate}
\item For a non-empty bounded subset $B\subset E^\prime$, $B$ is L-weakly compact iff $\sup_{x^\prime\in B}|x^\prime(x_n)|\rightarrow0$ for every norm bounded $uaw$-null sequence $(x_n)\subset E$.
\end{enumerate}
	
\end{theorem}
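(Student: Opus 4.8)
The plan is to prove the equivalences by establishing the implication chain $(1)\Rightarrow(2)\Rightarrow(3)\Rightarrow(4)\Rightarrow(1)$, using the unbounded-convergence hierarchy already recorded in the excerpt together with the characterization of L-weakly compact sets in Proposition~\ref{L-weakly-1}. The key structural facts I would lean on are: (i) the implications $x_\alpha^\prime\xrightarrow{uo}0\Rightarrow x_\alpha^\prime\xrightarrow{uaw}0\Rightarrow x_\alpha^\prime\xrightarrow{uaw^*}0$ (the first from Proposition~\ref{uo-uaw^*-un-uaw}\eqref{uo-uaw^*-un-uaw-2} combined with the trivial $uaw\Rightarrow uaw^*$, and noting $uo\Rightarrow uaw$ follows since $|x_\alpha^\prime|\wedge u^\prime\xrightarrow{o}0$ gives weak convergence); and (ii) the splitting $A\subset[-x,x]+\epsilon\cdot B_E$ with $x\in E^a_+$ furnished by Proposition~\ref{L-weakly-1}\eqref{L-weakly-3}. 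The role of Lemma~\ref{epsion} is precisely to let me reduce a uniform-convergence statement on $A$ to the same statement on the order interval $[-x,x]$, up to $\epsilon$.

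First I would prove $(2)\Rightarrow(3)\Rightarrow(4)$, which are immediate by monotonicity of the convergence modes: every $uo$-null sequence is $uaw$-null, and every $uaw$-null sequence is $uaw^*$-null, so a hypothesis quantified over the larger class ($uaw^*$-null) is stronger and implies the statements for the smaller classes. Thus $(2)\Rightarrow(3)$ and $(3)\Rightarrow(4)$ are direct. The substantive implications are $(1)\Rightarrow(2)$ and $(4)\Rightarrow(1)$.

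For $(1)\Rightarrow(2)$, assume $A$ is L-weakly compact and let $(x_n^\prime)$ be a norm bounded $uaw^*$-null sequence in $E^\prime$. Fix $\epsilon>0$; by Proposition~\ref{L-weakly-1}\eqref{L-weakly-3} there is $x\in E^a_+$ with $A\subset[-x,x]+\epsilon\cdot B_E$. By Lemma~\ref{epsion}(1) it suffices to show $\sup_{y\in[-x,x]}|x_n^\prime(y)|\to0$, and this supremum equals $|x_n^\prime|(x)$. Now the $uaw^*$-null hypothesis gives $|x_n^\prime|\wedge u^\prime\xrightarrow{w^*}0$ for every $u^\prime\in E^\prime_+$; the point is to upgrade this to $|x_n^\prime|(x)\to0$ using that $x\in E^a$. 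I would argue that on the order-continuous ideal $E^a$ the functionals behave well: since $x\in E^a$, the norm is order continuous at $x$, and a boundedness-plus-order-continuity argument (modeled on the proof of $(1)\Rightarrow(3)$ in Theorem~\ref{uaw^*-w^*}, invoking \cite[Theorem~4.18]{AB:06} to split $|x_n^\prime|=|x_n^\prime|\wedge u^\prime+(|x_n^\prime|-u^\prime)^+$ with the tail small uniformly in $n$) forces $|x_n^\prime|(x)\to0$. This splitting step, controlling the tail $(|x_n^\prime|-u^\prime)^+(x)$ uniformly, is where I expect the real work to lie.

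For $(4)\Rightarrow(1)$, I would argue contrapositively using Proposition~\ref{disjoint-0}: if $A$ is not L-weakly compact, then by Proposition~\ref{L-weakly-1}\eqref{L-weakly-1-2} there is a disjoint sequence $(x_n^\prime)\subset B_{E^\prime}$ with $\rho_A(x_n^\prime)=\sup_{x\in A}|x_n^\prime|(|x|)\not\to0$, hence $\sup_{x\in A}|x_n^\prime(x)|\not\to0$ as well; but every disjoint sequence is $uo$-null by Proposition~\ref{disjoint-0}, contradicting $(4)$. Finally, for the additional equivalence with $(5)$ under the hypothesis that $E^\prime$ has order continuous norm, I would invoke Proposition~\ref{uo-uaw^*-un-uaw}\eqref{uo-uaw^*-un-uaw-4}, by which $un$-null and $uaw$-null (indeed $uo$-, $un$-, $uaw$-, $uaw^*$-null) coincide in the relevant range, so $(5)$ slots into the chain exactly as $(2)$--$(4)$ do; a disjoint sequence remains $un$-null under order continuity, closing the loop. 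The main obstacle throughout is the tail-control step in $(1)\Rightarrow(2)$, converting weak* smallness of the truncations into genuine smallness of $|x_n^\prime|(x)$ on the order-continuous element $x$.
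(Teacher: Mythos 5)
Your plan contains two genuine errors. First, the link $(3)\Rightarrow(4)$ rests on the claim that every $uo$-null sequence in $E^\prime$ is $uaw$-null, justified by ``$|x_\alpha^\prime|\wedge u^\prime\xrightarrow{o}0$ gives weak convergence.'' This is false: order convergence of an order bounded sequence yields convergence only against the order continuous functionals in $E^{\prime\prime}$, not against all of $E^{\prime\prime}$, and in general $uo$-null does not imply $uaw$-null. The paper's own example in Section~2 refutes the claim: $x_n=\sum_{i=n}^{\infty}e_i$ in $l_\infty=(l_1)^\prime$ satisfies $x_n\downarrow0$, hence is $uo$-null, yet with $u=(1,1,1,\dots)$ one has $|x_n|\wedge u=x_n$, which a Banach limit keeps away from $0$ weakly, so $(x_n)$ is not $uaw$-null. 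What is true is Proposition~\ref{uo-uaw^*-un-uaw}\eqref{uo-uaw^*-un-uaw-2}, $uo\Rightarrow uaw^*$; accordingly the paper proves $(2)\Rightarrow(4)$ directly and closes $(3)$ and $(4)$ not through each other but by proving $(3)\Rightarrow(1)$ and $(4)\Rightarrow(1)$ separately, both via Proposition~\ref{disjoint-0} (a disjoint sequence in $B_{E^\prime}$ is simultaneously $uaw$-null and $uo$-null). Your cycle is repairable in exactly this way, but as written it is broken at $(3)\Rightarrow(4)$.

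Second, your $(4)\Rightarrow(1)$ has a wrong-direction inequality: from $\rho_A(x_n^\prime)=\sup_{x\in A}|x_n^\prime|(|x|)\nrightarrow0$ you infer $\sup_{x\in A}|x_n^\prime(x)|\nrightarrow0$, but $|x_n^\prime(x)|\leq|x_n^\prime|(|x|)$, so largeness of $\rho_A(x_n^\prime)$ gives no lower bound on $\sup_{x\in A}|x_n^\prime(x)|$; the two quantities can differ when $A$ is not solid, and the theorem assumes no solidity. The paper bridges precisely this gap with the duality formula $|x^\prime|(|x|)=\sup\{|f(x)|:|f|\leq|x^\prime|\}$: choose $f_n$ with $|f_n|\leq|x_n^\prime|$ nearly attaining $\rho_A(x_n^\prime)$; then $(f_n)\subset B_{E^\prime}$ is again $uo$-null, being dominated by a $uo$-null sequence, so hypothesis $(4)$ applied to $(f_n)$ forces $\rho_A(x_n^\prime)\rightarrow0$, and Proposition~\ref{L-weakly-1}\eqref{L-weakly-1-2} finishes. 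Apart from these two points your outline tracks the paper: your $(1)\Rightarrow(2)$ is in substance the paper's argument, since the paper implements your ``restrict to $E^a$'' idea by splitting $E^\prime$ along the band $(E^a)^\prime$ and applying Theorem~\ref{uaw^*-w^*} to the order continuous lattice $E^a$ (which is exactly where the splitting via \cite[Theorem~4.18]{AB:06} that you flag as the real work is carried out), followed by Lemma~\ref{epsion}; and your handling of $(5)$ matches the paper's appeal to the equivalence of $un$- and $uaw$-convergence when $E^\prime$ is order continuous.
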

\begin{proof}
$(1)(b)\Rightarrow(1)(c)$ Every $uaw$-null net is $uaw^*$-null.

$(1)(b)\Rightarrow(1)(d)$ Every $uo$-null net is $uaw^*$-null.
	
$(1)(d)\Rightarrow(1)(a)$ Let $(x_n^\prime)$ be an arbitrary disjoint sequence in $B_{E^\prime}$. To prove that $A$ is L-weakly compact, we only need to show that $\rho_{A}(x^\prime_n)\rightarrow0$ by \cite[Proposition~3.6.2(2)]{MN:91}, where $\rho_{A}(f)$ is defined by $$\rho_{A}(f)=\sup\{|f|(|x|):x\in A\}=\sup\{|g(x)|:|g|\leq|f|,x\in A\}$$ for every $f\in E^\prime$. Assume by way of contradiction that $\rho_{A}(x^\prime_n)\nrightarrow0$. Then, by passing to a subsequence if necessary, we can suppose that there would exist some $\epsilon>0$ satisfying $\rho_{A}(x^\prime_n)=\sup\{|x^\prime_n|(\abs{x}):x\in A\}>\epsilon$ for all $n$. For each $n$ choose some $x_n\in A$ and some $(y_n^\prime)$ in $E^\prime$ with $|y_n^\prime|\leq |x_n^\prime|$ such that $|y_n^\prime(x_n)|>\epsilon$. Clearly, $(y_n^\prime)$ is likewise a norm bounded disjoint sequence. Hence, $y_n^\prime\xrightarrow{uo}0$. Therefore, $(y_n^\prime)$ converges uniformly to zero on $A$. This leads to a contradiction.
	
$(1)(c)\Rightarrow(1)(a)$ is similar to $(1)(d)\Rightarrow(1)(a)$.
	
$(1)(a)\Rightarrow(1)(b)$ According to \cite[Proposition~3.6.2]{MN:91}, for any $\epsilon>0$, there exists some $x\in E^a_+$ such that $A\subset[-x,x]+\epsilon\cdot B_{E}$. 
	
First of all, we prove that $y_n^\prime(x)\rightarrow0$ for every disjoint sequence $(y_n^\prime)$ in $B_{E^\prime}$. Every monotone sequence in $[0,x]$ is norm convergent since $x\in E^a_+$. According to \cite[Corollary~2.3.6]{MN:91}, every disjoint sequence in $[-x,x]$ is norm convergent. It follows from \cite[Theorem~2.3.3]{MN:91} that   $\rho_{[-x,x]}(x_n^\prime)\rightarrow0$ for every disjoint sequence $(y_n^\prime)\subset B_{E^\prime}$. Therefore, $y_n^\prime(x)\rightarrow0$.
	
Then, we claim that every bounded $uaw^*$-null sequence $(x_n^\prime)$ converges uniformly on $[-x,x]$. We can find a sequence of functionals $(z_n^\prime)$ such that $\abs{y^\prime_n}(\abs{x})=|z_n^\prime(x)|$ with $|z_n^\prime|\leq|y_n^\prime|$ since $\abs{f}(\abs{x})=\max\{|g(x)|:|g|\leq|f|\}$. Clearly, $(z_n^\prime)$ is also disjoint. Hence, $\abs{y^\prime_n}(\abs{x})\rightarrow0$. Applying \cite[Theorem~4.36]{AB:06} to the seminorm be $\abs{\cdot}(\abs{x})$, the identity operator and $B_{E^\prime}$, we have that, for any $\epsilon>0$, there exists some $u^\prime\in E^\prime_+$ such that
$$\sup_{x^\prime\in B_{E^\prime}}(\abs{x^\prime}-\abs{x^\prime}\wedge u^\prime)(\abs{x})=\sup_{x^\prime\in B_{E^\prime}}\big((\abs{x^\prime}-u^\prime)^+\big)(\abs{x})<\epsilon.$$

Clearly, $(\abs{x_n^\prime}\wedge u^\prime)(|x|)\rightarrow0$. Therefore, $\abs{x^\prime_n}(|x|)\rightarrow0$, moreover $\sup_{y\in [-x,x]}\abs{x^\prime_n(y)}\rightarrow0$. 
	
Finally, according to the arbitrariness of $\epsilon$, we have that $\sup_{x\in A}|x^\prime_n(x)|\leq \sup_{x\in [-x,x]+\epsilon\cdot B_{E}}|x^\prime_n(x)|\rightarrow0$.
	
$(2)\Leftarrow$ is similar to $(1)(d)\Rightarrow(1)(a)$.
	
$(2)\Rightarrow$ Using \cite[Proposition~3.6.2]{MN:91} and \cite[Theorem~4.36]{AB:06}, the proof is similar to $(1)(a)\Rightarrow(1)(b)$.
\end{proof}

\begin{corollary}\label{L-weakly compact set-sequence}
Let $E$ be a Banach lattice, $A$ a bounded subset in $E$ and $B$ a bounded subset in $E^\prime$, the following statements hold.
\begin{enumerate}
\item $A$ is L-weakly compact set iff $x_n^\prime(x_n)\rightarrow0$ for every bounded $uaw^*$-null sequence $(x_n^\prime)$ in $E^\prime$ and every sequence $(x_n)$ in $A$ iff $x_n^\prime(x_n)\rightarrow0$ for every bounded $uaw$-null sequence $(x_n^\prime)$ in $E^\prime$ and every sequence $(x_n)$ in $A$ iff $x_n^\prime(x_n)\rightarrow0$ for every bounded $uo$-null sequence $(x_n^\prime)$ in $E^\prime$ and every sequence $(x_n)$ in $A$.
\item $B$ is L-weakly compact set iff $x_n^\prime(x_n)\rightarrow0$ for every bounded $uaw$-null sequence $(x_n)$ in $E$ and every sequence $(x_n^\prime)$ in $B$.
\end{enumerate}
\end{corollary}
  
\begin{theorem}\label{disjoint-d}
Let $E$ be a Banach lattice, for bounded solid subsets $A$ of $E$ and $B$ of $E^\prime$, the following statements hold.
\begin{enumerate}
\item If $E$ has order continuous norm, then the following conditions are equivalent.
\begin{enumerate}
\item $A$ is L-weakly compact set.
\item\label{disjoint-d-1} For every positive disjoint sequence $(x_n)$ in $A$ and each bounded $uaw^*$-null sequence $(x_n^\prime)$ in $E^\prime$, $x_n^\prime(x_n)\rightarrow0$.
\item For every positive disjoint sequence $(x_n)$ in $A$ and each bounded $uaw$-null sequence $(x_n^\prime)$ in $E^\prime$, $x_n^\prime(x_n)\rightarrow0$.
\item For every positive disjoint sequence $(x_n)$ in $A$ and each bounded $uo$-null sequence $(x_n^\prime)$ in $E^\prime$,     $x_n^\prime(x_n)\rightarrow0$.
\end{enumerate}
\item If $E^\prime$ has order continuous norm, then $B$ is L-weakly compact iff $x_n^\prime(x_n)\rightarrow0$ for every positive disjoint sequence $(x_n^\prime)\subset B$ and each bounded $uaw$-null sequence $(x_n)$ in $E$.
\end{enumerate}
	
\end{theorem}

\begin{proof}
$(1)(a)\Rightarrow(1)(b)$ by Corollary \ref{L-weakly compact set-sequence}.
	
$(1)(b)\Rightarrow(1)(a)$ Let $(x_n^\prime)$ be an arbitrary bounded $uaw^*$-null sequence in $E^\prime$. To finish the proof, we have to show that  $\sup_{x\in A}|x_n^\prime(x)|\rightarrow0$. Assume by way of contradiction that $\sup_{x\in A}|x_n^\prime(x)|\nrightarrow0$.  Then, by passing to a subsequence if necessary, we can suppose that there would exist some $\epsilon>0$ such that $\sup_{x\in A}|x_n^\prime(x)|>\epsilon$ for all $n$. Note that the equality $\sup_{x\in A}|x_n^\prime(x)|=\sup_{0\leq x\in A}|x_n^\prime|(x)$ holds, since $A$ is solid. $|x_n^\prime|\xrightarrow{w^*}0$ since $E^\prime$ is order continuous. Let $n_1=1$. Because $|x_n^\prime|(4x_{n_1})\rightarrow0$,
there exists some $1<n_2\in\mathbb{N}$ such that $|x_{n_2}^\prime|(4x_{n_1})<\frac{1}{2}$. It is easy to see that we can find a strictly increasing subsequence $(n_k)_{k=1}^\infty\subset \mathbb{N}$ such that $|x_{n_{m+1}}^\prime|(4^m\sum_{k=1}^{m}x_{n_k})<\frac{1}{m}$ for all $m$. Let
$$x=\sum\limits_{k=1}^\infty2^{-k}x_{n_{k}}, y_m=(x_{n_{m+1}}-4^m\sum\limits_{k=1}^{m}x_{n_k}-2^{-m}x)^+.$$
According to \cite[Lemma~4.35]{AB:06},$(y_m)$ is a disjoint sequence in $A\cap E_+$. Now, we have
\begin{align*}
|x^\prime_{n_{m+1}}|(y_m)&=|x^\prime_{n_{m+1}}|(x_{n_{m+1}}-4^m\sum\limits_{k=1}^{m}x_{n_k}-2^{-m}x)^+\\
&\geq |x^\prime_{n_{m+1}}|(x_{n_{m+1}}-4^m\sum\limits_{k=1}^{m}x_{n_k}-2^{-m}x)\\
&=|x^\prime_{n_{m+1}}|(x_{n_{m+1}} )-|x^\prime_{n_{m+1}}|(4^m\sum\limits_{k=1}^{m}x_{n_k})-2^{-m}|x^\prime_{n_{m+1}}|x\\
&>\epsilon-\frac{1}{m}-2^{-m}|x^\prime_{n_{m+1}}|x.
\end{align*} 
Let $m\rightarrow\infty$, it is clear that $2^{-m}|x^\prime_{n_{m+1}}|x\rightarrow0$. Hence, $|x^\prime_{n_{m+1}}|(y_m)\nrightarrow0$. This leads to a contradiction.
	
The rest of the proof is similar.
\end{proof}

Recall that a continuous operators $T:E\rightarrow F$ from a Banach space to a Banach lattice is said to be \emph{L-weakly compact} whenever $T(B_E)$ is L-weakly compact set in $F$. 

As applications of Theorem \ref{L-weakly compact sets} and Corollary \ref{L-weakly compact set-sequence}, the following results characterize L-weakly compact operators on Banach lattices.
\begin{theorem}\label{L-weakly compact operators}
Let $E$ be a Banach space and $F$ a Banach lattice, the following statements hold.
\begin{enumerate}
\item For a continuous operator $T:E\rightarrow F$, the following conditions are equivalent.
\begin{enumerate}
\item $T$ is L-weakly compact.
\item $y_n^\prime(Tx_n)\rightarrow0$ for each sequence $(x_n)$ in $B_E$ and every bounded $uaw^*$-null sequence $(y_n^\prime)$ in $F^\prime$.	
\item $y_n^\prime(Tx_n)\rightarrow0$ for each sequence $(x_n)$ in $B_E$ and every bounded $uaw$-null sequence $(y_n^\prime)$ in $F^\prime$.	
\item $y_n^\prime(Tx_n)\rightarrow0$ for each sequence $(x_n)$ in $B_E$ and every bounded $uo$-null sequence $(y_n^\prime)$ in $F^\prime$.	
\end{enumerate}
\item For a continuous operator $S:E\rightarrow F^\prime$, $S$ is L-weakly compact iff $S x_n(y_n)\rightarrow0$ for each sequence $(x_n)$ in $B_{E}$ and every bounded $uaw$-null sequence $(x_n)$ in $F$.
\end{enumerate}	
\end{theorem} 
\begin{theorem}\label{L-weakly compact operator-disjoint}
Let $E$ and $F$ be Banach lattices, the following statements hold.
\begin{enumerate}
\item For a positive operator $T:E\rightarrow F$, if $F$ is order continuous, then the following conditions are equivalent.
\begin{enumerate}
\item $T$ is L-weakly compact.
\item $y_n^\prime(Tx_n)\rightarrow0$ for each positive disjoint sequence $(x_n)\subset B_E$ and every bounded $uaw^*$-null sequence $(y_n^\prime)$ in $F^\prime$.	
\item $y_n^\prime(Tx_n)\rightarrow0$ for each positive disjoint sequence $(x_n)\subset B_E$ and every bounded $uaw$-null sequence $(y_n^\prime)$ in $F^\prime$.	
\item $y_n^\prime(Tx_n)\rightarrow0$ for each positive disjoint sequence $(x_n)\subset B_E$ and every bounded $uo$-null sequence $(y_n^\prime)$ in $F^\prime$.	
\end{enumerate}
\item For a positive operator $S:E\rightarrow F^\prime$, if $F^\prime$ is order continuous norm, then $S$ is L-weakly compact iff $S x_n(y_n)\rightarrow0$ for each positive disjoint sequence $(x_n)$ in $B_{E}$ and every bounded $uaw$-null sequence $(x_n)$ in $F$.
\end{enumerate}
	
\end{theorem}
\begin{proof}
$(1)(a)\Rightarrow(1)(b)$ by Theorem \ref{L-weakly compact operators}.
	
$(1)(b)\Rightarrow(1)(a)$ Let $(y_n^\prime)$ be an arbitrary bounded $uaw^*$-null sequence in $F^\prime$. $\abs{y_n^\prime}\xrightarrow{w^*}0$ since $F$ is order continuous. Hence, $|T^\prime(y_n^\prime)|(z)=\sup_{y\in T[-z,z]}|y_n^\prime(y)|\rightarrow0$ for each $z\in E_+$. Without loss of generality, $y_n^\prime\geq0$ for all $n$. To finish the proof, we have to show that $\sup_{x\in B_E}|y_n^\prime(Tx)|\rightarrow0$. Assume by way of contradiction that $\sup_{x\in B_E}|y_n^\prime(Tx)|\nrightarrow0$. Then, by passing to a subsequence if necessary, we can suppose that there would exist some $\epsilon>0$ such that $\sup_{x\in B_E}|y_n^\prime(Tx)|>\epsilon$ for all $n$. Note that the equality  $\sup_{x\in B_E}|y_n^\prime(Tx)|=\sup_{0\leq x\in B_E}\{|y^\prime_n\circ T|(x)\}$ since $A$ is solid. For every $n$, there exists $z_n$ in $B_E\cap E_+$ such that $|T^\prime( y^\prime_n)|(z_n)>\epsilon$. It is similar to the proof of Theorem \ref{disjoint-d} that there exists a subsequence $(y_n)$ of $(z_n)$ and a subsequence $(g_n)$ of $( y_n^\prime)$ such that 
$$|g_n\circ T |(y_n) > \epsilon, |g_{n+1} \circ T |(4^n\sum_{i=1}^{n}y_i)<\frac{1}{n}.$$
Let $x=\sum_{i=1}^{\infty}2^{-i}y_i$ and $x_n=(y_{n+1}-4^n(\sum_{i=1}^{n}y_i)-2^{-n}x)^+$, according to \cite[Lemma~4.35]{AB:06}, $(x_n)$ is positive and disjoint. Hence, 	$$|g_{n+1} \circ T |(x_n)\geq|g_{n+1} \circ T |(y_{n+1}-4^n(\sum_{i=1}^{n}y_i)-2^{-n}x)> \epsilon-\frac{1}{n}-2^{-n}|g_{n+1} \circ T |x.$$
Therefore, $|g_{n+1} \circ T|(x_n)\nrightarrow0$. Clearly, there exists a sequence $(u_n)$ in $E$ satisfying $|u_n|\leq x_n$ such that $|g_{n+1}(Tu_n)|=|g_{n+1} \circ T|(x_n)$. As applications of $$|g_{n+1}(Tu_n^+)|+|g_{n+1}(Tu_n^-)|\geq|g_{n+1}(Tu_n)|=|g_{n+1} \circ T|(x_n)\nrightarrow0,$$ we have $g_{n+1}(Tu_n^+)\nrightarrow0$. This leads to a contradiction.
	
The rest of the proof is similar.
\end{proof}

An operator $T:E\rightarrow F$ from a Banach lattice to a Banach space is said to be \emph{M-weakly compact} if $Tx_n\rightarrow0$ for every disjoint sequence $(x_n)$ in $B_E$.

The following result shows some characterizations of M-weakly compact operators on Banach lattices.
\begin{theorem}\label{M-weakly compact operators}
Let $E$ and $F$ be Banach lattices, for a continuous operator $T:E\rightarrow F$, the following statements hold.
\begin{enumerate}
\item The following conditions are equivalent.
\begin{enumerate}
\item $T$ is a M-weakly compact.
\item $Tx_n\rightarrow0$ for every $uaw$-null sequence $(x_n)$ in $B_E$.
\item $Tx_n\rightarrow0$ for every $uo$-null sequence $(x_n)$ in $B_E$.
\end{enumerate}  
\item For its adjoint operator $T^\prime:F^\prime\rightarrow E^\prime$, the following conditions are equivalent.
		\begin{enumerate}
			\item $T^\prime:F^\prime\rightarrow E^\prime$ is a M-weakly compact operator.
			\item $T^\prime y^\prime_n\rightarrow0$ for every $uaw^*$-null sequence $(y_n^\prime)$ in $B_{F^\prime}$.		
			\item $T^\prime y^\prime_n\rightarrow0$ for every $uaw$-null sequence $(y_n^\prime)$ in $B_{F^\prime}$.
			\item $T^\prime y^\prime_n\rightarrow0$ for every $uo$-null sequence $(y_n^\prime)$ in $B_{F^\prime}$.	
		\end{enumerate}
	\end{enumerate}
	
\end{theorem}
\begin{proof}
$(1)(b)\Leftarrow(1)(a)$ and $(1)(c)\Leftarrow(1)(a)$ Every disjoint sequence is $uaw$ and $uo$-null.
	
$(1)(a)\Rightarrow(1)(b)$ For a bounded $uaw$-null sequence $(x_n)$ in $E$. According to \cite[Proposition~3.6.11]{MN:91}, we can find that $T^\prime:F^\prime\rightarrow E^\prime$ is L-weakly compact. Hence, $\sup_{x^\prime\in T^\prime(B_F^\prime)}|x^\prime(x_n)|\rightarrow0$ for all $uaw$-null sequence $(x_n)$ in $B_E$ by Theorem \ref{L-weakly compact sets}. Therefore, $\Vert Tx_n\Vert=\sup_{x^\prime\in T^\prime(B_F^\prime)}|x^\prime(x_n)|\rightarrow0$ since $\abs{x^\prime(x_n)}=\abs{(T^\prime y^\prime)(x_n)}=\abs{( y^\prime\circ T)(x_n)}$.
	
In other words,                                                                                         according to \cite[Theorem~5.60]{AB:06}, for any $\epsilon>0$, there exists some $u\in E_+$ such that 
$$\Vert T(\abs{x}-\abs{x}\wedge u)\Vert=\Vert T\big((\abs{x}-u)^+\big)\Vert<\epsilon$$ holds for all $x\in B_E$. Clearly, $T$ is order-weakly compact. It follows from \cite[Corollary~3.4.9]{MN:91} that $T(\abs{x_n}\wedge u)\rightarrow 0$. Hence, $T\abs{x_n}\rightarrow0$. Therefore, $Tx_n\rightarrow0$ since $x_n\xrightarrow{uaw}0$ iff $x_n^{\pm}\xrightarrow{uaw}0$ iff $\abs{x_n}\xrightarrow{uaw}0$.

$(1)(a)\Rightarrow(1)(c)$ Using \cite[Theorem~3.4.4]{MN:91}, the proof is similar to $(1)(a)\Rightarrow(1)(b)$.

$(2)$ It is an application of Theorem \ref{L-weakly compact sets} and $\sup_{y\in T(B_E)}|y_n^\prime(y)|=\Vert T^\prime y_n^\prime\Vert$.
\end{proof}
The above results is not true if we replace these convergences to $un$-convergence.
\begin{example}
Every bounded $un$-null sequence in $\big(ba(2^\mathbb N\big))^\prime=((\ell_\infty)^\prime)^\prime$ and $\ell_\infty$ converges uniformly to zero on the unit ball of $ba(2^\mathbb N)=(\ell_\infty){^\prime}$ since $(\ell_\infty)$ and $\big(ba(2^\mathbb N\big))^\prime$ have order unit. But, $B_{ba(2^\mathbb N)}$ is not L-weakly compact.

Every bounded $un$-null sequence in $\big(ba(2^\mathbb N\big))^\prime=((\ell_\infty)^\prime)^\prime$ and $\ell_\infty$ converges uniformly to zero on $I(B_{ba(2^\mathbb N)})$ ($I$ is the identity operator on $B_{ba(2^\mathbb N)}$). But, $I$ is not L-weakly compact.
	
The identity operator $\hat{I}$ on $\ell_\infty$ maps a bounded $un$-null sequence to a norm-null sequence. But, $\hat{I}$ is not M-weakly compact.
\end{example}
\noindent \textbf{Acknowledgement.} The research is supported by National Natural Science Foundation of China(NSFC:51875483).

\end{document}